\DeclareMathOperator{\im}{im}
\newtheorem{theorem}{Theorem}[section]
\newtheorem{corollary}[theorem]{Corollary}
\newtheorem{prop}[theorem]{Proposition}
\newtheorem{remark}[theorem]{Remark}
\theoremstyle{definition}
\newtheorem{definition}[theorem]{Definition}
\newcommand{\id}{\mbox{id}}
\begin{document}
	\title{Big fundamental groups: generalizing homotopy and big homotopy}
		\author{Keith Penrod}
		\address{Department of Mathematics\\ Morehouse College\\ 830 Westview Dr SW\\ Atlanta, GA 30314}
		\ead{keith.penrod@morehouse.edu}
	%	\subjclass[2010]{Primary 55Q05; Secondary 06A05}
		\begin{abstract}
			The concept of big homotopy theory was introduced by J.~Cannon and G.~Conner in \cite{big} using big intervals of arbitrarily large cardinality to detect big loops.  We find, for each space, a canonical cardinal that is sufficient to detect all big loops and all big homotopies in the space.  
		\end{abstract}
		\begin{keyword}
			homotopy theory \sep big homotopy theory \sep big interval \sep order theory 
		\MSC[2010]{55Q05} \MSC[2010]{06A05}
		\end{keyword}
		\maketitle

\section{Introduction} % (fold)
\label{sec:introduction}

Cannon and Conner \cite{big} defined big homotopy theory and proved that for any Hausdorff space, the big fundamental group is well-defined.  In this paper, we show that it is well-defined for any pointed space $X$ and calculate a canonical cardinal $\gamma(X)$ that is sufficient for computing the big fundamental group.  In particular, for any cardinal $\alpha$, we define the functor $\pi_1^\alpha$ using big intervals of cardinality at most $\alpha$, show that for any $X$, the big fundamental group $\Pi_1(X)$ is naturally realized by $\pi_1^\alpha$, and even find a canonical interval $I(X)$ such that any big loop in $X$ can be realized as a path from $I(X)$.  

We state here some of the definitions and results from that paper as a convenience to the reader and to provide background and motivation for the main part of the paper, which is to improve upon the theory and generate a family of functors to calculate homotopy in a slightly different way.

The concept of big homotopy theory is simply to extend the idea of a loop to allow for objects other than the real interval $[0,1]$ to be the domain of a loop.  More specifically, a \emph{big interval} is a totally-ordered set which is compact and connected.  Equivalently, a totally-ordered set which is order complete, dense, and has a maximal and minimal element.  A big interval is endowed with the order topology.  Then a big path is merely a continuous function whose domain is a big interval.  A big loop is a big path whose endpoints coincide.  For notational purposes, given a big interval $I$, the minimal element will be denoted $0_I$ and $1_I$ will denote its maximal element.

Big path homotopy is slightly more complicated to define.  Instead of simply a function which starts at one path and ends at the other, we require a slightly weaker criterion.  The big paths $f:I\to X$ and $g:J\to X$ are said to be \emph{big path homotopic} if the there are big intervals $A$ and $B$, a continuous function $H:A\times B\to X$, and surjective order-preserving maps $p:A\to I$ and $q:B\to J$ such that the following are satisfied.
\begin{enumerate}
	\item $f(0_I)=g(0_J)$ and $f(1_I)=g(1_J)$ that is, $f$ and $g$ agree on their endpoints.
	\item $H(s,0_B)=f(p(s))$ for all $s\in A$.
	\item $H(s,1_B)=g(q(s))$ for all $s\in A$.
	\item $H(0_A,t)$ and $H(1_A,t)$ are constant for all $t\in B$.
\end{enumerate}
When such functions exist, we say that the triple $(H,p,q)$ forms a big homotopy from $f$ to $g$.  We think of $B$ as the parameter space and when this space is significant, we call the triple a \emph{$B$-homotopy}.

Big path homotopy is shown to be an equivalence relation.  Given a pointed topological space $(X,x_0)$, the class of big loops based at $x_0$ reduces to a set of equivalence classes of loops using this definition.  In \cite{big}, it is shown that this is a set when $X$ is Hausdorff.  A slight modification of the proof shows that it is true for all $X$.  This set is called the \emph{big fundamental group} of $X$ based at $x_0$ and is denoted by $\Pi_1(X,x_0)$.  This set is turned into a group by defining path concatenation as the group operation and reverse path as the inverse operation.  In this paper, we define for each cardinal $\alpha$ a new group called $\pi_1^\alpha$ and show that the big fundamental group is realized by $\pi_1^\alpha$, for some $\alpha$ depending on $X$.

% section introduction (end)

\section{Big Intervals} % (fold)
\label{sec:big_intervals}
One primary purpose of the paper is to construct, given a cardinal $\alpha$, a functor $\pi_1^\alpha$ which will give a big fundamental group given a pointed space $(X,x_0)$.  For now we will define $\pi_1^\alpha(X,x_0)$ using all big loops in $X$ based at $x_0$ whose domain has density at most $\alpha$, and big homotopies also with parameter spaces of density at most $\alpha$.    We will see below that it is possible to construct one big interval (depending on $\alpha$) which by itself will allow for all the big paths and big homotopies allowed by this definition.

\begin{prop}
	Let $I$ and $J$ be big intervals.  The following are equivalent.
	\begin{enumerate}
		\item There is an order-preserving injection $f:I\to J$
		\item There is a dense subset $A\subset I$ and an order-preserving injection $h:A\to J$.
		\item There is an order-preserving surjection $g:J\to I$
	\end{enumerate}
\end{prop}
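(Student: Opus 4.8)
The plan is to show the three conditions equivalent by proving the cycle of implications (1) $\Rightarrow$ (2) $\Rightarrow$ (3) $\Rightarrow$ (1). The implication (1) $\Rightarrow$ (2) is immediate: take $A = I$, which is dense in itself, and $h = f$. The implication (3) $\Rightarrow$ (1) is also short: given an order-preserving surjection $g : J \to I$, use the axiom of choice to pick, for each $x \in I$, a point $\sigma(x) \in g^{-1}(x)$; then $g \circ \sigma = \id_I$, and since $g$ is order-preserving, $x < x'$ forces $\sigma(x) < \sigma(x')$ (otherwise applying $g$ would reverse the inequality), so $\sigma$ is a strictly order-preserving — in particular injective — map $I \to J$, which is exactly (1).

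The heart of the argument is (2) $\Rightarrow$ (3). Given a dense $A \subseteq I$ and an order-preserving injection $h : A \to J$, I would define $g : J \to I$ by $g(y) = \sup\{a \in A : h(a) \le y\}$, taking the supremum in $I$ (and setting $g(y) = 0_I$ when that set is empty). This is well-defined because $I$, being a big interval, is order complete with a least element; it is order-preserving because $y \le y'$ makes the two defining sets nested. The substance is surjectivity: given $x \in I$, I would produce a preimage. When $x \in A$ one checks $g(h(x)) = x$, using density of $A$ in $I$ to see that $\sup\{a \in A : a \le x\} = x$; when $x \notin A$ one instead takes $y = \sup\{h(a) : a \in A,\ a < x\}$ — a supremum in $J$, which exists by order completeness of $J$ — and verifies $g(y) = x$, the key point being that for any $a \in A$ with $a > x$ there is, by order-density of $I$ together with density of $A$, some $a' \in A$ with $x < a' < a$, which forces $h(a) > y$ and hence keeps such $a$ out of the defining set. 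The endpoints $0_I$ and $1_I$ should be disposed of first as separate easy cases, so that in the generic argument one may assume $0_I < x < 1_I$.

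I expect the main obstacle to be exactly this surjectivity verification: juggling the two flavors of density (the topological density of $A$ in $I$ and the order-density built into the definition of a big interval), confirming that every supremum invoked actually exists, and correctly handling the points of $I$ not in $A$ together with the two endpoints. As an alternative route with the same crux, one could instead prove (2) $\Rightarrow$ (1) directly by extending $h$ to all of $I$ via $f(x) = \sup\{h(a) : a \in A,\ a \le x\}$ and checking that this extension remains strictly order-preserving, then close the cycle with (1) $\Rightarrow$ (3) using $g(y) = \sup\{x \in I : f(x) \le y\}$. I would choose whichever bookkeeping comes out cleaner, but in either formulation the delicate step is the same monotone Dedekind-cut computation.
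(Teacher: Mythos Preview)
Your proposal is correct and follows essentially the same cycle (1) $\Rightarrow$ (2) $\Rightarrow$ (3) $\Rightarrow$ (1) as the paper, with the same construction $g(y)=\sup\{a\in A: h(a)\le y\}$ for the substantive step (2) $\Rightarrow$ (3); indeed your surjectivity check is more carefully spelled out than the paper's. The only difference is in (3) $\Rightarrow$ (1): the paper avoids the axiom of choice by taking the canonical section $f(t)=\sup\, g^{-1}(t)$, using order completeness of $J$, whereas you invoke choice to pick an arbitrary point of each fiber---both work, but the sup construction is slightly cleaner here since completeness is already part of the hypothesis.
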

\begin{proof}
	Condition (2) follows immediately from condition (1).  We show that (2) implies (3), so assume $h$ exists.  Define $g$ as follows.  For a point $j\in J$, define the set $S_j=\{i\in A\mid h(i)\le j\}$.  Then define $g(j)=\sup S_j$.  Note that for any $j$ such that $S_j=\emptyset$, we have $g(j)=0_I$.  The property of order-preserving is obvious.  To see that it is surjective, let $i\in I$.  Let $j=\sup\{h(i^\prime)\mid i^\prime<i,\ i^\prime\in A\}$.  Since $A$ is dense in $I$, we see that $i=\sup S_{j}$ because for any $i^\prime<i$, it must be that $h(i^\prime)<j$.  Thus $g$ is surjective.
	
	Finally, we show that (3) implies (1).  This relies on the fact that a big interval is order complete.  Suppose that $g$ exists.  Define $f$ by $f(t)=\sup\{s\in J\mid g(s)=t\}$.  Since $g$ is surjective, this set is never empty for any $t$.  The fact that it is order-preserving and injective are obvious.
\end{proof}

It is also noted that if $f:I\to J$ is a function between big intervals which is order-preserving and surjective, then it is automatically continuous.  Using the above result, we can see that if $I$ is a big interval into which other big intervals embed, then it can map onto all of those big intervals, which means that it can be used to represent big loops based on any of those intervals.  In other words, we seek a big interval $I$ that accepts any big interval up to a given cardinality.  

\begin{definition}
	Let $T$ be a totally-ordered set and $\alpha$ a cardinal.  Then we say that $T$ is \emph{$\alpha$-perfect} if the following are true.
	\begin{enumerate}
		\item There is a set $S\subset T$ with cardinality $\alpha$ that is order dense.
		\item Given any totally-ordered set $T^\prime$ with cardinality at most $\alpha$, there is an order-preserving injection $f:T^\prime\to T$.
	\end{enumerate}
\end{definition}

We are interested in particular in the case when $T$ is a big interval.  Thus, any big interval with a dense subset of cardinality at most $\alpha$ being able to embed in $T$ means that $T$ will map onto any such interval.  The other condition---that $T$ have a dense subset of cardinality $\alpha$---is to guarantee that any two such intervals will yield the same group.  Before moving on, we show that such intervals do exist for arbitrarily large $\alpha$ by explicitly constructing them here.

\begin{theorem}
	Let $\alpha$ be an infinite cardinal.  Define $\hat \alpha=\sup\{2^\beta\mid \beta<\alpha\}$.  Then the set $I(\alpha)=[0,1]^\alpha$, endowed with the lexicographical order, is an $\hat \alpha$-perfect big interval.
\end{theorem}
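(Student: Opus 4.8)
The plan is to verify in turn the three assertions: that $I(\alpha)$ is a big interval, that it carries an order-dense subset of cardinality $\hat\alpha$, and that every totally ordered set of cardinality at most $\hat\alpha$ admits an order-preserving injection into it. That $I(\alpha)$ is a big interval I would check directly. Since $\alpha$ is an ordinal, the lexicographic order on $[0,1]^\alpha$ is a genuine linear order, with least element the constant sequence $\bar 0$ and greatest element $\bar 1$. Density is immediate: given $x<y$, let $\beta$ be the least coordinate at which they differ, so $x_\beta<y_\beta$; then the sequence agreeing with $x$ on all coordinates $<\beta$, equal to $\tfrac12(x_\beta+y_\beta)$ at $\beta$, and equal to $0$ thereafter lies strictly between $x$ and $y$. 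For order-completeness I would build, for a nonempty $S\subseteq I(\alpha)$, its supremum $m$ by recursion on the coordinate index: having chosen $m_\gamma$ for all $\gamma<\beta$, set $S_\beta=\{x\in S: x_\gamma=m_\gamma\ \text{for all}\ \gamma<\beta\}$; if $S_\beta=\varnothing$, put $m_\gamma=0$ for all $\gamma\ge\beta$ and stop, and otherwise put $m_\beta=\sup\{x_\beta:x\in S_\beta\}$, a supremum in $[0,1]$. Comparing an arbitrary $x\in S$, and an arbitrary $y<m$, with $m$ at the first coordinate of disagreement shows that $m$ is the least upper bound of $S$. A dense, order-complete linear order with two endpoints is a compact connected linearly ordered space by the equivalence recorded just before the definition of $\alpha$-perfect, so $I(\alpha)$ is a big interval.

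For the dense subset I would take $S$ to be the set of sequences of bounded support, $S=\bigcup_{\beta<\alpha}\bigl([0,1]^\beta\times\{\bar 0\}\bigr)$. The witnesses produced in the density argument already lie in $S$ (each has support bounded by some $\beta+1<\alpha$), so $S$ is order-dense in $I(\alpha)$; and a routine cardinal computation, using that $\alpha$ is infinite, gives $|S|=\sum_{\beta<\alpha}\mathfrak c^{\,|\beta|}=\hat\alpha$.

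The substantive point is universality. The starting input is the classical embedding: for any linear order $L$ and any ordinal $\mu$ mapped onto $L$ by $\xi\mapsto a_\xi$, the map $\phi\colon L\to\{0,1\}^\mu$ into the lexicographic cube defined by $\phi(a)_\xi=1$ if $a_\xi\le a$ and $\phi(a)_\xi=0$ otherwise is an order-embedding; indeed, for $a<b$ the coordinates at which $\phi(a)$ and $\phi(b)$ disagree are exactly the $\xi$ with $a<a_\xi\le b$, at each of which $\phi(a)_\xi=0<1=\phi(b)_\xi$, and at least one such $\xi$ exists because $b$ is some $a_\xi$. Composing with the inclusion $\{0,1\}^\mu\hookrightarrow[0,1]^\mu$ and with the zero-padding $[0,1]^\mu\hookrightarrow[0,1]^{\mu'}$ for $\mu\le\mu'$ (which preserves the lexicographic order) already shows that every linear order of cardinality at most $\alpha$ embeds into $I(\alpha)$. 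To replace $\alpha$ by $\hat\alpha$, the idea is to exploit the internal structure of $I(\alpha)$: for each $\beta<\alpha$ the set of sequences with a prescribed restriction to the first $\beta$ coordinates is again a copy of $I(\alpha)$, and these assemble into a depth-$\alpha$ lexicographic tree each of whose branchings is a full copy of $[0,1]$. Given $L$ with $|L|\le\hat\alpha=\sup_{\beta<\alpha}2^{|\beta|}$, one would decompose $L$ into convex blocks, each of cardinality at most $2^{|\beta|}$ for a suitable $\beta<\alpha$, indexed by a quotient that embeds into $[0,1]$, embed each block via the previous paragraph into a copy of $I(\alpha)$ sitting past the first coordinate, and reassemble these into one order-preserving injection $L\to I(\alpha)$.

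I expect this last step to be the main obstacle. The cardinal $\hat\alpha$ can be strictly — indeed very much — larger than $\alpha$, while $I(\alpha)$ has only $\alpha$ coordinates, so an order of size $\hat\alpha$ has to be distributed across those $\alpha$ coordinates by genuinely using the richness of the factor $[0,1]$. This is exactly where the choice $\hat\alpha=\sup\{2^\beta:\beta<\alpha\}$ (rather than $2^\alpha$) is what makes the bookkeeping close, and carrying out the block decomposition uniformly for every $L$, together with checking that the reassembled map respects the lexicographic order, is the part that will need real care.
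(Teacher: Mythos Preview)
Your treatment of the first two items---that $I(\alpha)$ is a big interval and that the eventually-zero sequences form an order-dense subset of cardinality $\hat\alpha$---is correct and essentially identical to the paper's argument.

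The universality step, however, is genuinely incomplete, and the sketch you give does not close. Your ``previous paragraph'' embeds any linear order of size at most $\alpha$ into $I(\alpha)$; but the convex blocks in your proposed decomposition are allowed size up to $2^{|\beta|}$ for some $\beta<\alpha$, which may well exceed $\alpha$ (for instance when $\alpha=\omega_1$ and $\mathfrak c>\aleph_1$). So invoking the previous paragraph to handle each block is circular: embedding a block of size $2^{|\beta|}$ is exactly the universality statement you are trying to prove. One might try to iterate the decomposition, but then one must show the recursion terminates within $\alpha$ coordinate-layers, and nothing in your outline controls that depth. There is also no argument offered that an arbitrary $L$ of size $\hat\alpha$ admits a convex partition whose quotient has size at most $\mathfrak c$ and whose blocks have the stated sizes. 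You correctly flag this as ``the main obstacle,'' but as written no mechanism is supplied to overcome it.

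The paper takes a different route for this step. Rather than decomposing $L$, it enumerates $L$ as $(t_\beta)_{\beta<\hat\alpha}$ and builds the embedding $f$ by transfinite recursion on $\beta$, placing one point at a time. At stage $\beta$ one forms, coordinate by coordinate, the supremum $L_\beta^\delta$ over the already-placed images lying below $t_\beta$ and the infimum $U_\beta^\delta$ over those lying above, finds the first coordinate $\delta$ at which $L_\beta^\delta<U_\beta^\delta$, and sets $f(t_\beta)$ to agree with $L_\beta^\gamma$ for $\gamma<\delta$, to be the midpoint $\tfrac12(L_\beta^\delta+U_\beta^\delta)$ at coordinate $\delta$, and to be $\tfrac12$ thereafter. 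The key counting claim is that such a $\delta<\alpha$ always exists: the bisection scheme accommodates at least $2^\delta$ points within the first $\delta$ coordinates, and since $\beta<\hat\alpha$ there is some $\delta<\alpha$ with $2^\delta>\beta$. This direct point-by-point placement is what makes the bookkeeping close, and it is the idea missing from your outline.
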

\begin{proof}
	To aid in discussion, we will abbreviate $I(\alpha)$ simply as $I$ since $\alpha$ is clear from context. First we show that $I$ is a big interval.  It is easily seen that the minimal point is the one which is $0$ in every coordinate and the maximal point is the one which is $1$ in every coordinate.  We will show below that $I$ has a subset of cardinality $\hat \alpha$ which is (order) dense.  This implies that $I$ is (order) dense.
	
	To see that $I$ is order complete, let $A\subset I$ be nonempty.  It is necessarily bounded above by the maximal element of $I$.  We construct the element $s$ inductively as follows.  Define $s_0=\sup\{a_0\mid a\in A\}$.  Then, for any $\beta\in \alpha$ for which $s_\gamma$ has been defined for all $\gamma<\beta$, define $s_\beta$ as follows.  Let $A_\beta=\{a\in A\mid a_\gamma=s_\gamma\mbox{ for all }\gamma<\beta\}$.  If $A_\beta$ is empty, define $s_\beta=0$.  Otherwise, define $s_\beta=\sup A_\beta$.  That $s$ defined in this way is the supremum of $A$ is straightforward.  Hence, $I$ is order complete and is therefore a big interval.
	
	To see that $I$ has a dense subset of order $\hat\alpha$, let $S$ be the set of all elements of $I$ which are eventually $0$.  That is, for $s\in S$, there is a coordinate $\beta\in \alpha$ such that $s_\gamma=0$ for all $\gamma\ge \beta$.  Note that for any cardinal $\beta<\alpha$, the set $[0,1]^\beta$ is contained in $S$, identified with its natural image of appending a tail of $0$'s as necessary and that in fact $S=\displaystyle \bigcup_{\beta<\alpha}[0,1]^\beta$. Therefore, we conclude that $\displaystyle |S|=\sup_{\beta\in \alpha}\left\{\left|[0,1]^\beta\right|\right\}$ and for $\beta$ infinite, we have $\left|[0,1]^\beta\right|=\left|\left(2^{\aleph_0}\right)^\beta\right|=\left|2^{\aleph_0\cdot \beta}\right|=\left|2^\beta\right|$, thus $\displaystyle |S|=\sup_{\beta\in \alpha} \left|2^\beta\right|=\hat \alpha$.  To see that $S$ is dense in $I$, let $a<b$.  Let $\beta$ be the minimal coordinate where $a$ and $b$ disagree.  Then there is $t\in [0,1]$ such that $a_\beta<t<b_\beta$.  Thus, the point $s\in S$ given by $s_\gamma=a_\gamma$ for $\gamma<\beta$, $s_\beta=t$, and $s_\gamma=0$ for $\gamma>\beta$ is such that $a<s<b$ as desired.
	
	Finally, we show that if $T$ is any totally-ordered set of cardinality at most $\hat\alpha$, there is an order-preserving injection $f:T\to I$.  Index $T$ by $\hat\alpha$ then define the function $f$ as follows.  First define $f(t_0)=(\frac 12)_\gamma$.  That is, the point that is $\frac 12$ in every coordinate.  Now, given $\beta\in \hat\alpha$ such that $f(t_\gamma)$ is defined for all $\gamma<\beta$, we define $f(t_\beta)$ as follows.  Define $L_\beta^0=\sup\{f(t_\gamma)_0\mid \gamma<\beta \text{ and }t_\gamma<t_\beta\}$ and $U_\beta^0=\inf\{f(t_\gamma)_0\mid \gamma<\beta\text{ and }t_\gamma>t_\beta\}$, allowing for $0$ or $1$ respectively if either set is empty.  It is seen that $L_\beta^0\le U_\beta^0$.  If $L_\beta^0<U_\beta^0$, then define $f(t_\beta)_0=\dfrac{L_\beta^0+U_\beta^0}2$ and $f(t_\beta)_\gamma=\frac 12$ for all $\gamma>0$.
	
	If, however, $L_\beta^0=U_\beta^0$, then for every $\delta\in  \alpha$ such that $L_\beta^\epsilon=U_\beta^\epsilon$ for all $\epsilon<\delta$, define
	\begin{align}
		L_\beta^\delta  &= \sup\{f(t_\gamma)_\delta\mid \gamma<\beta,\ t_\gamma<t_\beta, \text{ and } f(t_\gamma)_\epsilon=L_\beta^\epsilon\text{ for all }\epsilon<\delta \}\\
		U_\beta^\delta &=\inf\{f(t_\gamma)_\delta\mid \gamma<\beta,\ t_\gamma>t_\beta, \text{ and } f(t_\gamma)_\epsilon=L_\beta^\epsilon\text{ for all }\epsilon<\delta\}.
	\end{align}
	
	We claim that there is $\delta\in \alpha$ such that $L_\beta^\delta<U_\beta^\delta$.  Indeed, it is seen that in each coordinate at least $\omega_0$ points will be chosen, using the definition given above, and hence at least $\omega_0^\delta\ge 2^\delta$ points can be mapped using the first $\delta$ coordinates in this method.  Since $\beta<\hat \alpha=\sup\{2^\gamma\mid \gamma<\alpha\}$, we see that there is $\gamma<\alpha$ such that $2^\gamma>\beta$, which implies that the desired coordinate $\delta$ exists.  Let $\delta$ be the minimal coordinate where $L_\beta^\delta<U_\beta^\delta$.  Then define \begin{equation}
		f(t_\beta)_\gamma = \begin{cases}
			L_\beta^\gamma &\text{if }\gamma<\delta\\
			\frac{L_\beta^\delta+U_\beta^\delta}2 &\text{if }\gamma=\delta\\
			\frac 12 &\text{if }\gamma>\delta
		\end{cases}.
	\end{equation}
	
	That $f$ defined in this way is (strictly) order-preserving is obvious.
\end{proof}

A cardinal $\alpha$ is called a \emph{strong limit cardinal} if for any cardinal $\beta<\alpha$ we have $2^\beta<\alpha$.  It is seen that such cardinals of arbitrarily large size exist.  Indeed, $\aleph_0$ is the first such cardinal.  Define $\beth_0=\aleph_0$.  For any ordinal $\gamma$ such that $\beth_{\gamma}$ is defined, define $\beth_{\gamma+1}=2^{\beth_\gamma}$.  Then, for a limit ordinal $\delta$, define $\beth_{\delta}=\displaystyle \bigcup_{\gamma<\delta}\beth_{\gamma}$.  Then for any limit ordinal $\delta$, we see that $\beth_{\delta}$ as defined here is a strong limit cardinal.  

\begin{remark}
	If $\alpha$ is a strong limit cardinal then $\alpha=\sup\{2^\beta\mid \beta<\alpha\}$.  That is, using the notation above, we have $\hat\alpha=\alpha$.  Therefore, $[0,1]^\alpha$, with the lexicographical order, is an $\alpha$-perfect big interval.
\end{remark}

It may be the case that $\alpha$-perfect intervals exist for every cardinal $\alpha$.  The author conjectures that this is the case but is unable to prove it.  However, we have shown that $\alpha$-perfect intervals exist for arbitrarily large $\alpha$, which is sufficient for the purposes of the paper.

% section big_intervals (end)

\section{Big Fundamental Groups} % (fold)
\label{sec:big_fundamental_groups}
Now that we have the intervals necessary, we use them to construct our new big fundamental groups.  The only requirement that we make on our big intervals is that they are $\alpha$-perfect for some $\alpha$.  We define a group on such an interval and then show that it depends only on $\alpha$, not the choice of interval.  

Let $(X,x_0)$ be a pointed space and $I$ a big interval which is $\alpha$-perfect.  Let $\Omega^I_1(X,x_0)$ be the set of all big loops on $I$ based at $x_0$.  That is, the set of all continuous functions $f:I\to X$ with $f(0_I)=f(1_I)=x_0$.  

Loop concatenation is precisely the same as in \cite{big}.  Let $I\vee I$ be the disjoint union of $I$ with itself, identifying the final point of the first set with the initial point of the second.  To dispel ambiguity in discussion, we write this as $I_1\vee I_2$, with $I_1=I_2=I$, simply to distinguish between points in the former and latter copies of $I$.  Let $f,g\in \Omega^I_1(X,x_0)$.   Define $f\vee g:I_1\vee I_2\to X$ by \begin{equation}
	f\vee g(s)=\begin{cases}
		f(s)& \mbox{if }s\in I_1\\
		g(s)& \mbox{if }s\in I_2
	\end{cases}.
\end{equation}

The first thing to note is that we have only define loops whose domain is $I$ and here the domain is $I\vee I$.  However, this is remedied by noting that $I\vee I$ has a dense subset of cardinality $\alpha$, since $I$ does, and therefore there is an order-preserving epimorphism $\phi:I\to I\vee I$.  By pre-composing $f\vee g$ with $\phi$ the domain issue is corrected.  It is obvious from the definition of big homotopy that $(f\vee g)\circ \phi$ is $I$-homotopic to $f\vee g$, and since we are only concerned with homotopy classes, it is unnecessary to make the distinction.

For the inverse operation, given a loop $f\in \Omega_1^I(X,x_0)$, let $\bar I$ denote the set $I$ with the order reversed and define $\bar f:\bar I\to X$ by $\bar f(\bar t)=f(t)$.  Then, as before, since $\bar I$ has a dense subset of cardinality $\alpha$, there is an order epimorphism $\psi:I\to \bar I$ and $\bar f$ is homotopic to $\bar f \circ \psi$.  

It is shown in \cite{big} that if $f\sim f^\prime$ and $g\sim g^\prime$ then $(f\vee g)\sim (f^\prime\vee g^\prime)$ and $\bar f\sim \overline{f^\prime}$.  Let $\pi_1^I(X,x_0)$ be the set of $I$-homotopy classes of loops in $\Omega_1^I(X,x_0)$.  The group structure on $\pi_1^I(X,x_0)$ given by  $[f]\ast [g]=[f\vee g]$ and $[f]^{-1}=[\bar f]$ is the same as in the big fundamental group.  Note, however, that $I$ is the only allowed parameter space for homotopies, whereas in $\Pi_1$, all big homotopies are allowed.

\begin{theorem}
	Let $I$ be an $\alpha$-perfect big interval.  Then for any pointed space $(X,x_0)$ we have $\pi_1^I(X,x_0)\cong \pi_1^\alpha(X,x_0)$.
\end{theorem}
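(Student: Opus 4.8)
The plan is to exhibit the tautological comparison map
\[
\Phi:\pi_1^I(X,x_0)\longrightarrow\pi_1^\alpha(X,x_0),\qquad \Phi([f]_I)=[f]_\alpha,
\]
and show it is a well-defined group isomorphism. It is well defined because every big loop $f:I\to X$ has domain of density at most $\alpha$ (the interval $I$ carries a dense subset of cardinality $\alpha$), so $f$ represents a class in $\pi_1^\alpha$; and every $I$-homotopy is in particular a big homotopy whose parameter spaces have density at most $\alpha$, so $I$-homotopic loops are identified in $\pi_1^\alpha$. It then remains to verify that $\Phi$ is a homomorphism and a bijection.

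The workhorse is the following reparametrization fact, already implicit in the discussion preceding the statement: if $p:I'\to I''$ is an order-preserving surjection of big intervals and $f:I''\to X$ is a big loop, then $f\circ p\sim f$, via a big homotopy whose two parameter spaces may be taken to be $I'$ and $[0,1]$. Indeed, put $H:I'\times[0,1]\to X$, $H(s,t)=f(p(s))$, which is continuous since it is constant in $t$, with the reparametrization $\id_{I'}$ on the bottom edge and $p$ on the top edge; the four axioms of big homotopy are then immediate, using that an order-preserving surjection sends $0_{I'}\mapsto 0_{I''}$ and $1_{I'}\mapsto 1_{I''}$ and that all loops involved are based at $x_0$. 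Applying this with $p=\phi:I\to I\vee I$ and with $p=\psi:I\to\bar I$ shows that passing an $I$-loop through these two reparametrizations does not change its class in $\pi_1^\alpha$; since the product and inverse on $\pi_1^I$ are defined precisely by these reparametrizations, it follows that $\Phi$ is a group homomorphism.

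For surjectivity, let $f:J\to X$ represent a class of $\pi_1^\alpha$, so $J$ has a dense subset of cardinality at most $\alpha$. Since $I$ is $\alpha$-perfect, that dense subset admits an order-preserving injection into $I$, so by the Proposition above there is an order-preserving surjection $p:I\to J$. Then $f\circ p:I\to X$ is an $I$-loop, and by the workhorse $f\circ p\sim f$, whence $\Phi([f\circ p]_I)=[f\circ p]_\alpha=[f]_\alpha$. For injectivity, suppose $f,g:I\to X$ are $I$-loops with $f\sim g$ through a big homotopy $(H,p,q)$, $H:A\times B\to X$, $p,q:A\to I$, where $A$ and $B$ have density at most $\alpha$. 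Again $\alpha$-perfectness of $I$ together with the Proposition above produces order-preserving surjections $\pi_A:I\to A$ and $\pi_B:I\to B$. Set $H'=H\circ(\pi_A\times\pi_B):I\times I\to X$, which is continuous, together with $p'=p\circ\pi_A$ and $q'=q\circ\pi_A$. Using $\pi_A(0_I)=0_A$, $\pi_B(0_I)=0_B$ and the analogous identities at $1$, one checks directly that $(H',p',q')$ is an $I$-homotopy from $f$ to $g$: the bottom and top edges are $f\circ p'$ and $g\circ q'$, the side edges $H'(0_I,\cdot)$ and $H'(1_I,\cdot)$ are constant because $H(0_A,\cdot)$ and $H(1_A,\cdot)$ are, and the endpoints of $f$ and $g$ already coincide. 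Hence $[f]_I=[g]_I$, so $\Phi$ is injective.

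I expect the injectivity step to be the only delicate point. One must be sure that the cardinality bounds built into the definition of $\pi_1^\alpha$ apply to all of the homotopy data one needs to pull back — that is, to the ``$A$'' factor as well as to the parameter space $B$ — so that both surjections $\pi_A$ and $\pi_B$ can be extracted from the single interval $I$; and one must check that replacing $A\times B$ by $I\times I$ disturbs neither continuity of $H'$ nor any of the four boundary conditions. Everything else reduces cleanly to the Proposition relating embeddings and surjections of big intervals and to the reparametrization workhorse.
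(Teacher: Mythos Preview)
Your argument is correct and uses the same ideas as the paper: $\alpha$-perfectness of $I$ produces order surjections onto any big interval of density at most $\alpha$, and homotopies are pulled back along these surjections. The only structural difference is that you build the comparison map $\Phi:\pi_1^I\to\pi_1^\alpha$ while the paper builds its inverse $\varphi:\pi_1^\alpha\to\pi_1^I$; consequently your injectivity step is the paper's well-definedness step (pulling an $\alpha$-homotopy back to $I\times I$), and your well-definedness is the paper's injectivity (an $I$-homotopy is already an $\alpha$-homotopy). You also verify explicitly that the map respects the group operation, which the paper leaves implicit.
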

\begin{proof}
	First we observe since $I$ is $\alpha$-perfect, for any big interval $J$ with density at most $\alpha$, there is an order-preserving injection $\nu_J:J\to I$ and hence a surjection $\xi_J:I\to J$.  So, given a loop $f:J\to (X,x_0)$, we define $\varphi:\pi_1^\alpha(X,x_0)\to \pi_1^I(X,x_0)$ byr $\varphi([f]_\alpha)=[f\circ \xi_J]_I$, where $[]_\alpha$ denotes homotopy class via a homotopy whose parameter space has density at most $\alpha$ and $[]_I$ denotes a homotopy whose parameter space is $I$.  
	
	To see that $\varphi$ is well-defined, suppose that $f:J_1\to X$ and $g:J_2\to X$ are $\alpha$-homotopic.  That is, there is a homotopy $H:J\times K\to X$ and surjective order maps $p:J\to J_1$ and $q:J\to J_2$ where $J$ and $K$ each have density at most $\alpha$.  Then we see that $H^\prime:I\times I\to X$ given by $H^\prime(s,t)=H(\xi_J(s),\xi_K(t))$ is an $I$-homotopy from $f\circ \xi_{J_1}$ to $g\circ \xi_{J_2}$, thus $[f]_I=[g]_I$, so $\varphi$ is well-defined. 
	
	To see that $\varphi$ is surjective, let $f:I\to (X,x_0)$ be a loop.  Since $I$ has density $\alpha$, we see that $[f]_\alpha\in \pi_1^\alpha(X,x_0)$ and $\xi_I=\id_I$ and hence $\varphi([f]_\alpha)=[f\circ \id_I]_I=[f]_I$, as desired.
	
	To see that $\varphi$ is injective, suppose that $f,g:I\to (X,x_0)$ are $I$-homotopic loops.  Again, since $I$ has density $\alpha$, this is an $\alpha$-homotopy from $f$ to $g$ hence $[f]_\alpha=[g]_\alpha$ so $\varphi$ is injective.
\end{proof}

We add here one simple example merely to demonstrate that for $\alpha\ne \beta$, the functors $\pi_1^\alpha$ and $\pi_1^\beta$ differ.  For a cardinal $\alpha$, let $I$ be a big interval with a dense set of cardinality $\alpha$ and such that $|I|\le 2^\alpha$.  Let $S(\alpha)$ be $I$ with its endpoints identified.  Now, given a cardinal $\beta$ (not necessarily distinct from $\alpha$), define $B(\alpha, \beta)$ to be the cone of $S(\alpha)$ using an interval $J$ having a dense set of cardinality $\beta$ and such that $|J|\le 2^\beta$.  Then we have the following calculations. \begin{equation}
	\pi_1^\gamma(S(\alpha)) = \begin{cases}
		1 & \mbox{if }\gamma<\alpha\\
		\mathbb{Z} & \mbox{if }\gamma\ge \alpha
	\end{cases}
\end{equation}
\begin{equation}
	\pi_1^\gamma(B(\alpha,\beta)) = \begin{cases}
		1 & \mbox{if } \gamma<\alpha\mbox{ or }\gamma\ge\beta\\
		\mathbb{Z} & \mbox{if }\alpha\le \gamma<\beta
	\end{cases}
\end{equation}

Intuitively, this is because when $\gamma<\alpha$, no loop is big enough to stretch all the way around $S(\alpha)$, but when $\gamma\ge \alpha$ it is and when $\gamma<\beta$, no homotopy is big enough to stretch over the disk bounding $S(\alpha)$ in $B(\alpha,\beta)$, but when $\gamma\ge \beta$, it is.

% section big_fundamental_groups (end)

\section{Realizing the Big Fundamental Group $\Pi_1$} % (fold)
\label{sec:realizing_the_big_fundamental_group_pi_1_}
Since any two $\alpha$-perfect big intervals generate the same group for any given space $X$, we may define the operator $\pi_1^\alpha$ to be $\pi_1^I$ where $I$ is $\alpha$-perfect for any $\alpha$ such that an $\alpha$-perfect big interval exists.  An immediate corollary of the work of Cannon and Conner is that for any pointed space $(X,x_0)$, there is an $\alpha$ such that $\pi_1^\alpha(X,x_0)\cong \Pi_1(X,x_0)$.  We give in this section a slightly improved version of their theorem to give an idea of what cardinality such an $\alpha$ need be.

\begin{definition}
	Given a space $X$ and a point $x\in X$, define the set $N_x$ to be the intersection of every open neighborhood of $x$ in $X$.  Then define the relation $R$ by $xR y$ if $y\in N_x$ and let $\cong$ be the equivalence relation generated by $R$.  That is, $x\cong y$ if there are $x=x_0,x_1,\ldots, x_n=y$ so that for each $i=0,\ldots , (n-1)$ we have either $x_i\in N_{x_{i+1}}$ or $x_{i+1}\in N_{x_i}$.
\end{definition}

Note that the separation axiom $T_1$ is equivalent to the statement that for all $x\in X$, $N_x=\{x\}$, in other words that $\cong$ is the equality relation.  

\begin{prop}
	Let $f,g:X\to Y$ be continuous functions such that for all $x\in X$, we have $f(x)\cong g(x)$ in $Y$.  Then there is a homotopy $H:X\times [0,1]\to Y$ from $f$ to $g$ that fixes every point where $f(x)=g(x)$.  
\end{prop}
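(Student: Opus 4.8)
The plan is to build $H$ fibrewise: along each segment $\{x\}\times[0,1]$ we traverse a zig-zag of the relation $R$ joining $f(x)$ to $g(x)$, performing each single $R$-step as an instantaneous jump, and then we check that the map so obtained on $X\times[0,1]$ is jointly continuous and constant on the fibres over $\{x : f(x)=g(x)\}$.

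The fact that makes everything work is local: $y\in N_x$ says exactly that every open neighbourhood of $x$ contains $y$ (equivalently $x\in\overline{\{y\}}$), so if $U\subseteq Y$ is open and $p\in U$ then $N_p\subseteq U$. From this comes the basic building block. If $A,B:X\to Y$ are continuous and $B(x)\in N_{A(x)}$ for every $x$, define $K:X\times[0,1]\to Y$ by $K(x,0)=A(x)$ and $K(x,t)=B(x)$ for $t>0$. Then $A^{-1}(U)\subseteq B^{-1}(U)$ for each open $U$, so $K^{-1}(U)=\bigl(A^{-1}(U)\times[0,1)\bigr)\cup\bigl(B^{-1}(U)\times(0,1]\bigr)$ is open; thus $K$ is a homotopy from $A$ to $B$, constant on every fibre over $\{x : A(x)=B(x)\}$. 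There is the mirror block with hypothesis $A(x)\in N_{B(x)}$ and the jump placed at $t=1$, and either block may be reparametrised onto any closed subinterval; two such blocks concatenate continuously at a shared endpoint because there the value coming in from the left lies in $N$ of the seam value, as does the value going out to the right.

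I would then normalise the zig-zags. For each $x$, the relation $f(x)\cong g(x)$ gives a finite $R$-chain; merging consecutive steps that run the same way (legitimate since $R$ is transitive) and inserting a trivial step at either end if necessary, the chain may be taken to alternate and to have the form $f(x)=c_0(x)\preceq c_1(x)\succeq c_2(x)\preceq\cdots\succeq c_{2k_x}(x)=g(x)$, where $p\preceq q$ abbreviates $q\in N_p$; padding with further trivial steps, we may also demand $c_j(x)=g(x)$ for all $j\ge 2k_x$. Now let step $m$ occupy the interval $[1-2^{-(m-1)},1-2^{-m}]$, realised by the start-jump block when $m$ is odd and by the end-jump block when $m$ is even, and set $H(x,1)=g(x)$. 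The resulting candidate $H$ starts at $f$, ends at $g$, and is constant on each fibre over $\{f=g\}$ (there $k_x=0$, so all $c_j(x)=f(x)$), and, granting that each $x\mapsto c_j(x)$ is continuous, its continuity at every point of $X\times[0,1)$ follows from the two building-block computations and the remark on concatenation.

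The substance of the proof --- and the step I expect to be the real obstacle --- is thus twofold. First, the zig-zags must be chosen so that every $c_j:X\to Y$ is continuous; that is, one needs a choice of zig-zag that varies coherently with $x$, and since the lengths $2k_x$ cannot be bounded in general (a single pointwise-defined finite uniform chain genuinely need not exist), the accumulating infinite chain above is forced rather than a convenience. Second, one must prove $H$ continuous at the accumulation time $t=1$ and at the points $(x_0,t)$ with $f(x_0)=g(x_0)$ where $x_0$ may fail to be interior to $\{f=g\}$; unwinding the definition, this is a semicontinuity requirement --- for each open $U\ni g(x_0)$ there must be a neighbourhood of $x_0$ on which the chosen zig-zag has arrived at $g$ before a time bounded away from $1$ and stays inside $U$ throughout. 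I would secure both points by constructing the $c_j$ not by an unstructured pointwise choice but by a recursion driven by the open sets of $Y$ that separate the relevant target values near each $x_0$, so that every open set $U$ which threatens continuity forces its preimage to be open; carrying out this bookkeeping is the heart of the matter, and the two building-block identities are precisely what make each step of it close up.
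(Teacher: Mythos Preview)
Your ``building block'' --- the case $B(x)\in N_{A(x)}$ for all $x$, handled by a single instantaneous jump --- is precisely the paper's proof; the paper places the jump at $t=\tfrac12$ rather than at an endpoint, but the continuity check is the same computation $A^{-1}(U)\subseteq B^{-1}(U)$. The paper does nothing further: it opens with ``Since homotopy is an equivalence relation, it suffices to consider the case when $f(x)\in N_{g(x)}$ for all $x\in X$,'' and then proves only that single-step case.

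Everything in your proposal after the building block is only a plan, and that is the gap. You correctly flag that the reduction to a single step requires continuous intermediate maps $c_j:X\to Y$ with each consecutive pair $c_{j-1}(x),\,c_j(x)$ single-step $R$-related, and that pointwise zig-zag choices need not assemble into continuous maps nor have bounded length. But your proposed remedy --- an infinite concatenation on dyadic subintervals with the $c_j$ produced by ``a recursion driven by the open sets of $Y$'' --- is never carried out, and as written it gives no reason why such continuous $c_j$ exist, nor why the chain near a given $x_0$ stabilises at $g(x_0)$ uniformly early enough to make $H$ continuous at $(x_0,1)$. So either follow the paper --- record the single-step homotopy and invoke transitivity of homotopy, accepting the same tacit reduction it makes --- or, if you intend to fill that reduction in honestly, the construction of globally continuous intermediate maps is exactly what remains, and nothing in the outline yet supplies it.
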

\begin{proof}
	Since homotopy is an equivalence relation, it suffices to consider the case when $f(x)\in N_{g(x)}$ for all $x\in X$.  Define the homotopy by \begin{equation}
		H(x,t) = \begin{cases}
			f(x) & \text{if }t<\frac 12\\
			g(x) & \text{if }t\ge\frac 12
		\end{cases}.
	\end{equation}
	It is clear that $H_0=f$, $H_1=g$, and that $H(x,t)$ is constant with respect to $t$ for any $x$ where $f(x)=g(x)$.  All that remains is to show that $H$ is continuous.  It is clearly continuous at the point $(x,t)$ for any $t\ne \frac 12$, so we need only show that for any $x\in X$, $H$ is continuous at $(x,\frac 12)$.  Fix an open set $U$ containing $h(x,\frac 12)=g(x)$.  Since $f(x)\in N_{g(x)}$, we see that $f(x)\in U$.  Since $f$ and $g$ are continuous, we see that there are open neighborhoods $V_1$ and $V_2$ of $x$ such that $f(V_1)\subset U$ and $g(V_2)\subset U$.  Now we note that $(V_1\cap V_2)\times [0,1]$ is an open neighborhood of $(x,\frac 12)$ and that $H((V_1\cap V_2)\times [0,1])\subset f(V_1)\cup g(V_2)\subset U$, so $H$ is continuous at $(x,\frac 12)$.  
\end{proof}

Given a space $X$, let $w(X)$ denote its weight---that is, the minimal cardinal such that $X$ has a basis of size $w(X)$.  Then define the compact weight of $X$ to be $w_c(X)=\sup\{w(C)\mid C\subset X,\ C\text{ compact}\}$.  Note that $w_c(X)\le w(X)$.  Using this, we give a slight modification to the statement and proof of \cite[Theorem 4.20]{big}

\begin{theorem}\label{upper-bound}
	Let $I$ be a big interval and $C$ a compact space.  Given a continuous function $f:I\times C\to X$, there is a big interval $J$ with a dense set of cardinality at most $\alpha=\max\{w_c(X),\aleph_0\}$, a continuous map $g:J\times C\to X$, and an order epimorphism $p:I\to J$ such that for all $(t,u)\in I\times C$, we have $f(t,u)\cong g(p(t),u)$.  
\end{theorem}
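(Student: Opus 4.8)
The plan is to follow the argument of \cite[Theorem 4.20]{big}, making two modifications: first, replace $X$ by the compact image of $f$, so that only $w_c(X)$ (not $w(X)$) is needed; second, weaken equality to $\cong$ so that a possible failure of $T_1$ is absorbed. For the first point, since $I$ is a big interval it is compact, hence $I\times C$ is compact and $K:=f(I\times C)$ is a compact subspace of $X$, so $w(K)\le w_c(X)\le\alpha$. I would fix a basis $\mathcal B$ for $K$ with $|\mathcal B|\le\alpha$ and enlarge it (keeping $|\mathcal B|\le\alpha$, using $\alpha\ge\aleph_0$) to be closed under finite unions and finite intersections. All remaining work takes place inside $K$.

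Next I would build a ``skeleton'' $D\subseteq I$ with $|D|\le\alpha$ recording the essential variation of $f$. For each finite subfamily $\mathcal F\subseteq\mathcal B$ that covers $K$, the sets $f^{-1}(B)$ ($B\in\mathcal F$) form an open cover of the compact space $I\times C$; using that each point of $I\times C$ has a product neighbourhood $W\times V$ (with $W$ an open subinterval of $I$) contained in some $f^{-1}(B)$, and invoking compactness of $C$ via the tube lemma, one refines this to a finite subcover by such rectangles on each of which $f$ is confined to a single member of $\mathcal F$; I then throw the endpoints of these finitely many intervals $W$, together with $0_I$ and $1_I$, into $D$. Since there are only $\le\alpha$ finite subfamilies of $\mathcal B$, this gives $|D|\le\alpha$. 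Let $J$ be the Dedekind completion of $D$ as a linear order, with every jump of $D$ blown up to a copy of $[0,1]$; then $J$ is compact, connected and dense, i.e. a big interval, and it has a dense set of cardinality $\le\alpha$. Applying the first Proposition of Section~\ref{sec:big_intervals} to the dense copy of $D$ inside $J$ and its embedding into $I$ yields an order epimorphism $p:I\to J$, which is automatically continuous.

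Then I would analyse $f$ on the fibres of $p$. A fibre $p^{-1}(j)$ is a closed subinterval of $I$ meeting $D$ only (possibly) at its endpoints, so for every $u\in C$ and every finite covering subfamily $\mathcal F$ the set $f\bigl(p^{-1}(j)\times\{u\}\bigr)$ lies in a single member of $\mathcal F$; a short compactness argument using that $\mathcal B$ is a basis then shows that two points of $K$ which cannot be separated by any finite basic cover must be $\cong$-related, so $f\bigl(p^{-1}(j)\times\{u\}\bigr)$ lies in one $\cong$-class for each $u$. I would define $g:J\times C\to X$ by $g(j,u)=f(j,u)$ for $j\in D$, and on each blown-up interval $[0,1]$ lying between consecutive points $d<d'$ of $D$ I would let $g(\,\cdot\,,u)$ be the homotopy from $f(d,\cdot)$ to $f(d',\cdot)$ supplied by the Proposition preceding this theorem (these two maps take $\cong$-related values at every $u$, both lying in the common $\cong$-class of the intervening fibre). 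Then $g$ is continuous, the pieces agree on $D$, and every value of $g$ over $p^{-1}(j)$ lies in the same $\cong$-class as the values of $f$ there, so $f(t,u)\cong g(p(t),u)$ for all $(t,u)$.

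The hard part is the skeleton step: proving $|D|\le\alpha$, equivalently that $J$ can genuinely be taken of density $\le\alpha$. The delicacy is that an arbitrary big interval $I$ may have enormous cellularity, so one must use the compactness of $C$ (not merely of $I$) both to force the rectangular refinements to be finite and, more importantly, to bound how many of them per covering family are needed; this is exactly where the compactness hypothesis on $C$ is indispensable and where the bookkeeping of \cite[Theorem 4.20]{big} does its real work. A secondary difficulty is the continuity of $g$: a crude choice of representative on a collapsed interval need not be continuous into a non-$T_1$ space, and it is precisely to repair this that one blows up the jumps and uses the step-function homotopy of the preceding Proposition, which is also the reason the conclusion is $f(t,u)\cong g(p(t),u)$ rather than the equality available in the Hausdorff case.
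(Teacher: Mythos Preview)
Your skeleton step is exactly the paper's: for each finite covering subfamily $\mathcal C\subseteq\mathcal B$ of $\im f$, record the finitely many $I$-endpoints from a box refinement of $I\times C$, obtaining a set (the paper calls it $A$, you call it $D$) of size at most $\alpha$. So the core of the two arguments coincides, including the use of compactness of $C$ to make each refinement finite.

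Where you diverge is in building $J$, $p$, and $g$, and here the paper is both simpler and avoids the gaps in your sketch. The paper pads $A$ with copies of $\mathbb{Q}$ to remove jumps, defines $s\sim t$ iff $[s,t]\cap A$ has at most one point, takes $J=I/{\sim}$ with $p$ the quotient map, and sets $g([t],u)=f(s,u)$ for \emph{any} chosen representative $s\in[t]$. Continuity of $g$ is then checked directly from the construction of $A$: given open $U\ni g([t],u)=f(s,u)$, one produces $a<s<b$ in $A$ and open $V\ni u$ with $f\bigl((a,b)\times V\bigr)\subseteq U$, whence $g\bigl(([a],[b])\times V\bigr)\subseteq U$. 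No $T_1$ hypothesis enters. So your stated motivation for the blow-up---that ``a crude choice of representative on a collapsed interval need not be continuous into a non-$T_1$ space''---is not borne out; the paper's representative map \emph{is} continuous, and the $\cong$ in the conclusion is needed only because different choices of representative may give $\cong$-related (not equal) values.

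Your alternative construction has two concrete gaps as written. First, $D$ is not dense in your $J$: inside each blown-up jump interval $[0,1]$ the set $D$ contributes only the two endpoints, so the hypothesis of the Proposition in Section~\ref{sec:big_intervals} (a dense subset of $J$ injecting into $I$) is not satisfied by $D$ alone; you would have to enlarge it by rationals in every blown-up interval and then embed those into the corresponding gaps of $I$. Second, and more seriously, your definition of $g$ covers only $j\in D$ and $j$ inside a blown-up jump, but the Dedekind completion of $D$ will in general contain two-sided limit points of $D$ that are neither (think of an irrational as a limit of rationals); $g$ is simply undefined there, and you also have not addressed continuity of the glued $g$ at such points or at the seams between consecutive blown-up pieces. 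Both defects are repairable, but once you repair them you are essentially reproducing the quotient $I/{\sim}$ and the representative definition of $g$---the detour through the preceding Proposition and the blow-ups buys nothing.
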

\begin{proof}
	Let $\mathcal B$ be a collection of open sets in $X$ whose restriction to $\im f$ is a basis of $\im f$ and such that $| \mathcal B|\le \alpha$.  For every finite subset $\mathcal C$ of $\mathcal B$ that covers $\im f$, let $a_i(\mathcal C), b_i(\mathcal C)\in I$ and $U_i(\mathcal C)\subset C$ be a finite collection with $U_i(\mathcal C)$ open such that $\{(a_i(\mathcal C),b_i(\mathcal C))\times U_i(\mathcal C)\}$ covers $I\times C$ and each set is mapped by $f$ into an element of $\mathcal C$, which is possible since $I\times C$ is compact.  Let $A=\displaystyle \bigcup_{\mathcal C,\ i} \{a_i(\mathcal C),b_i(\mathcal C)\}$.  Since $A$ has only finitely many elements for each finite subset of $\mathcal B$, assuming $\mathcal B$ is infinite, we have that $|A|\le |\mathcal B|\le \alpha$.  If $\mathcal B$ is finite, then $|A|\le \aleph_0\le \alpha$.  If there are any points $t_1<t_2\in A$ such that $(t_1,t_2)\cap A=\emptyset$, embed $\mathbb{Q}$ between $t_1$ and $t_2$, thus making $A$ (linearly) dense.  Note that this does not increase the size of $A$ beyond $\aleph_0\cdot |A|$.  
	
	Define the relation $\sim$ on $I$ by $s\sim t$ if $[s,t]\cap A$ is at most one point.  It is shown in \cite{big} that this is an equivalence relation and that the set $J=I/\sim$ is a big interval with $A$ (identified with its image under $\sim$) as a dense subset.  Define $p:I\to J$ to be the quotient map, namely $p(t)=[t]$.

	Now we show, by contrapositive, that if $s\sim t$ then $f(s,u)\cong f(t,u)$ for all $u\in C$.  Suppose that $s<t$ in $I$ with $f(s,u)\ncong f(t,u)$ for some $u\in C$.  That is, there are open sets $U$ and $V$ in $\im f$ with $f(s,u)\in U\setminus V$ and $f(t,u)\in V\setminus U$.  Let $\mathcal E$ be an open covering of $\im f$.  We see that $\mathcal F=\{E\cap U\mid E\in \mathcal E\}\cup \{V\}$ is also an open covering of $\im f$ and that for all $F\in \mathcal F$, at most one of $f(s,u)$ and $f(t,u)$ is in $F$.  Since $\im f$ is compact, there is a finite refinement $\mathcal B^\prime\subset \mathcal B$ of $\mathcal F$.  By construction of the set $A$, there are $a_1,b_1, a_2,b_2\in A$ and open sets $C_1,C_2\subset C$ with $(s,u)\in (a_1,b_1)\times C_1$ and $(t,u)\in (a_2,b_2)\times C_2$ such that $f((a_1,b_1)\times C_1)\subset B_1$ and $f((a_2,b_2))\times C_2)\subset B_2$ for some $B_1,B_2\in \mathcal B^\prime$.  So we see that $b_1,a_2\in (s,t)\cap A$, therefore $s\nsim t$.  
	
	Now we can define the function $g:(J)\times C\to X$ as follows.  For every equivalence class in $J$, choose a representative $t$ and define $g([t],u)=f(t,u)$.  The discussion in the preceding paragraph shows that, while the definition of $g$ may depend on the representative chosen, for any choice we have that $g([t],u)\cong f(t,u)$ for all $(t,u)\in I\times C$.
	
	To see that $g$ is continuous, let $([t], u)\in J\times C$ and let $U$ be an open neighborhood of $g([t],u)$.  By definition of $g$ there is $s\sim t$ such that $f(s,u)=g([t],u)$ and by construction of the set $A$, there are points $a<s<b$ with $a,b\in A$ and an open neighborhood $B$ of $u$ such that $f((a,b)\times B)\subset U$.  (Of course, there are two other cases, namely that $s<a$ or $s>b$, in which cases replace $(a,b)$ with $[0_I,a)$ or $(b,1_I]$ respectively and the remainder of the proof is the same.)  Since $A$ is (order) dense and $s\sim t$, we see that $([a],[b])$ is a neighborhood of $[t]$, and by definition of $g$ we see that $g(([a],[b])\times B)\subset U$, hence $g$ is continuous at $([t],u)$.  
	
\end{proof}

This result shows that there is an upper bound on how big loops need to be for any particular space $(X,x_0)$ to capture all of the big homotopy information of that space.  This in turn implies that there is a set's worth (rather than a proper class) of big homotopy classes in the construction of $\Pi_1(X,x_0)$.  Now that we have that result, we can sharpen the cardinality given in the above result from $w_c(X)$ to something which, in some cases, may be much smaller.

\begin{definition}
	Given a pointed space $(X,x_0)$, define the cardinality $\alpha(X,x_0)$ as follows.  Given space $I$, let $d(I)$ denote the density of $I$ (that is, the minimal cardinality of a dense set in $I$).  Given a big path homotopy $H:I\times J\to X$, let $\alpha(H)$ be the minimum cardinality among the set $\{\max\{d(I^\prime),d(J^\prime)\}\}$, where $I^\prime$ and $J^\prime$ range over all big intervals such that there is a big path homotopy $H^\prime:I^\prime\times J^\prime\to X$ and order surjections $p:I \to I^{\prime}$ and $q:J \to J^{\prime}$ such that $H(t,u)\cong H^\prime(p(t),q(u))$ for all $(t,u)\in I\times J$.
	
	Then, for each $[f]\in \Pi_1(X,x_0)$, define $\alpha([f])$ to be the minimum cardinality among the set $\{\alpha(H)\}$ where $H$ ranges over all big homotopies from any big loop in $[f]$ to any other.  In other words, if $g$ and $h$ are both big homotopic to $f$ in $X$ and $H$ is a big homotopy from $g$ to $h$, then $\alpha(H)$ is an element of the set in question.  Then we define $\alpha(X,x_0)=\sup\{\alpha([f])\mid [f]\in \Pi_1(X,x_0)\}$.  Theorem \ref{upper-bound} gives that $\alpha(X,x_0)\le w_c(X)$.  
\end{definition}

\begin{corollary}
	If $X$ is (locally) $\alpha$-path connected for some $\alpha$ then $X$ is (locally) $\alpha(X,x_0)$-path connected, where $\alpha(X,x_0)$ is defined as above.  Furthermore, we have that $\pi_1^{\alpha(X,x_0)}(X,x_0)$ is isomorphic to $\Pi_1(X,x_0)$.
\end{corollary}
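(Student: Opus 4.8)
Write $\beta=\alpha(X,x_0)$, and note $\beta\ge\aleph_0$ since every big interval has density at least $\aleph_0$. The plan is to treat the substantive claim — the isomorphism $\pi_1^{\beta}(X,x_0)\cong\Pi_1(X,x_0)$ — first, and then to deduce the path-connectivity statement from the same reduction machinery. Throughout I take $\pi_1^{\beta}$ to be the Section 2 group (loops of domain-density $\le\beta$ modulo homotopies of parameter-density $\le\beta$), which is defined for every cardinal whether or not a $\beta$-perfect interval exists, and I write $f\sim_\beta g$ for $\beta$-homotopy. I would consider the obvious map $\Phi\colon\pi_1^{\beta}(X,x_0)\to\Pi_1(X,x_0)$ carrying a $\beta$-homotopy class to its big-homotopy class; that $\Phi$ is a well-defined homomorphism is immediate, since a $\beta$-homotopy is a big homotopy and $\vee$ and reversal are defined identically in both groups, so the work is in bijectivity. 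Two facts would be set up first. \emph{(i)} By the definition of $\alpha(X,x_0)$, any big homotopy $H$ between big loops at $x_0$ lying in one big-homotopy class has $\alpha(H)\le\beta$, i.e.\ there is a big path homotopy $H'\colon A'\times B'\to X$ with $d(A'),d(B')\le\beta$, order epimorphisms $r,s$ onto the two factors of $\mathrm{dom}(H)$, and $H(t,u)\cong H'(r(t),s(u))$ everywhere. \emph{(ii)} Applying the Proposition on maps that agree up to $\cong$, taken with a one-point domain, shows that $a\cong b$ in $X$ forces an ordinary path $[0,1]\to X$ from $a$ to $b$, and that two paths with common endpoints whose values all lie in a single $\cong$-class are path-homotopic rel endpoints (the homotopy it produces fixes the set where the maps agree, here the endpoints).

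\emph{Surjectivity.} Given $[h]\in\Pi_1(X,x_0)$, I would reduce the constant homotopy on $h$ via (i) to some $H'$ as above and take the slice $f_0=H'(\,\cdot\,,0_{B'})$: its domain has density $\le\beta$, and since the side edges of the constant homotopy are $x_0$ the endpoints $f_0(0_{A'}),f_0(1_{A'})$ are each $\cong x_0$, so glueing on the short paths of (ii) turns $f_0$ into a genuine loop $\hat f$ at $x_0$ of domain-density $\le\beta$. To get $[\hat f]=[h]$ I would restrict $H(t,u)\cong H'(r(t),s(u))$ to $u=0$, obtaining that $f_0\circ r$ and a reparametrization of $h$ are $\cong$-equal pointwise; the Proposition then supplies a free homotopy between these two paths, and the standard move converting such a free homotopy into a rel-endpoint homotopy after conjugating by the two endpoint tracks — which lie in the $\cong$-class of $x_0$ and hence, by (ii), agree up to rel-endpoint homotopy with the short paths glued onto $f_0$ — produces $h\sim\hat f$.

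\emph{Injectivity.} Let $f,g$ be loops at $x_0$ of domain-density $\le\beta$ with $f\sim g$, via a big homotopy $H$. Since $f,g$ lie in one class, (i) reduces $H$ to $H''\colon A''\times B''\to X$ with $d(A''),d(B'')\le\beta$; its side edges are constant, with values $c_0,c_1$ each $\cong x_0$. Conjugating $H''$, in the parameter direction, by the short paths from $x_0$ to $c_0$ and from $c_1$ to $x_0$ yields a big homotopy of parameter-density $\le\beta$ between honest loops $\hat\phi_0,\hat\phi_1$ at $x_0$ of domain-density $\le\beta$, that is, a $\beta$-homotopy. Comparing its bottom and top slices with $f$ and $g$ exactly as in the surjectivity step gives $f\sim_\beta\hat\phi_0$ and $g\sim_\beta\hat\phi_1$; since $\sim_\beta$ is transitive on $\beta$-loops and the only auxiliary homotopies needed (reparametrizations, and those from the Proposition) have parameter space $[0,1]$, one concludes $f\sim_\beta g$. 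Hence $\Phi$ is bijective and $\pi_1^{\beta}(X,x_0)\cong\Pi_1(X,x_0)$.

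\emph{Path connectivity, and the main obstacle.} If $X$ is (locally) $\alpha$-path connected for some $\alpha$ then it is big-path connected, so any $a,b$ are joined by a big path $\gamma$; I would pick big paths $\sigma\colon x_0\to a$ and $\tau\colon b\to x_0$, reduce the constant homotopy on $\ell=\sigma\vee\gamma\vee\tau$ via (i) to a big path $\tilde\ell$ of density $\le\beta$ with an order epimorphism $r$ from $\mathrm{dom}(\ell)$ onto $\mathrm{dom}(\tilde\ell)$ and $\ell(t)\cong\tilde\ell(r(t))$, and restrict $\tilde\ell$ to the sub-interval $r(\mathrm{dom}(\gamma))$: this is a big path of density $\le\beta$ from a point $\cong a$ to a point $\cong b$, which (ii) lets me correct at the ends to a big path $a\to b$ of density $\le\beta$; the local statement is the same argument inside a small $\alpha$-path-connected neighbourhood. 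The step I expect to be the real work is the bookkeeping of $\cong$: the reductions from (i) match the original maps only up to $\cong$ and need not respect basepoints, so at every stage one must trade ``$\cong$-equal'' for ``homotopic'' via the Proposition and repair basepoints by conjugation with short paths, checking that these repairs keep each auxiliary homotopy within parameter-density $\beta$ and that the conjugating paths are determined up to homotopy. Everything else is routine big-homotopy bookkeeping.
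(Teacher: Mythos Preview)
Your approach is essentially the paper's, but worked out in far greater detail.  The paper's proof is a two-line sketch: it asserts that any big loop $f$ admits a reduction $g$ of density $\le\alpha([f])$ with $f(t)\cong g(p(t))$, and that any big homotopy between loops can be realized by one of density $\le\alpha(X,x_0)$; the $\cong$-repairs, the endpoint corrections, and the invocation of the Proposition are all left implicit, whereas you spell them out carefully.  Your path-connectivity argument differs: the paper forms the loop $f\vee\bar f$ at the arbitrary point $x$ and reduces that, while you route through the basepoint via $\sigma\vee\gamma\vee\tau$ to get a loop at $x_0$.  Your version is the cleaner one relative to the definition, since $\alpha(X,x_0)$ is defined in terms of $\Pi_1(X,x_0)$ and not $\Pi_1(X,x)$.

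One caution worth flagging: your fact~(i) claims $\alpha(H)\le\beta$ for \emph{every} big homotopy $H$ within a class, but the definition of $\alpha([f])$ is a \emph{minimum} over $H$, so literally only guarantees this for \emph{some} homotopy in the class.  The paper's proof makes exactly the same tacit move (``there is a homotopy $H\ldots$ from $f$ to $g$ where $\max\{d(I),d(J)\}=\alpha(H)\le\alpha(X,x_0)$''), so you are not deviating from it, but be aware that the injectivity step as written leans on this: you reduce the \emph{given} homotopy from $f$ to $g$ rather than the minimum-achieving one.  Surjectivity is unaffected, since there one only needs \emph{some} small-density representative in each class, and the bottom slice of any minimum-achieving reduction supplies it.
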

\begin{proof}
	Let $x,y\in X$.  Since $X$ is $\alpha$-path connected, there is a path $f:I\to X$ from $x$ to $y$ where the density of $I$ is at most $\alpha$.  Then we see that $f\vee \bar f$ is a loop in $\Pi_1(X,x)$.  So, as defined above, there is a loop $g:J\to X$ and an order epimorphism $p:(I\vee \bar I)\to J$ where the density of $J$ is at most $\alpha([g])\le \alpha(X,x_0)$ and such that $(f\vee \bar f)(t)\cong g(p(t))$ for all $t\in I\vee \bar I$.  The ``first half'' of $g$ then provides a path from $x$ to $y$ with a path whose domain has density at most $\alpha(X,x_0)$ as desired.
	
	For the second result, let $f:I\to X$ be a big loop based at $x_0$.  Then we see that there is a path $g:J\to X$ where $J$ has density at most $\alpha([f])\le \alpha(X,x_0)$ and an order epimorphism $p:I\to J$ such that $f(t)\cong g(p(t))$ for all $t\in I$.  Thus every big loop in $(X,x_0)$ can be represented by a loop whose density is at most $\alpha(X,x_0)$.  Furthermore, given any two big homotopic loops $f,g\in X$, there is a homotopy $H:I\times J\to X$ from $f$ to $g$ where $\max\{d(I),d(J)\}=\alpha(H)\le \alpha(X,x_0)$, hence every homotopy can be performed using a homotopy with density at most $\alpha(X,x_0)$, and therefore any two big loops in $(X,x_0)$ which are big homotopic are $\alpha(X,x_0)$-homotopic as well.
\end{proof}

The last result shows given any pointed space $(X,x_0)$, there is an $\alpha$ such that $\pi_1^\alpha(X,x_0)$ captures all the information of big loops that $\Pi_1(X,x_0)$ would capture.  This is why $\Pi_1$ yields a set instead of a proper class.  It can also be seen as a simplification.  For example, if $X$ is a metric space or second countable, then $\Pi_1(X,x_0)\cong \pi_1(X,x_0)$.  Also it is seen by the definition that $\alpha(X,x_0)$ is the smallest cardinality $\alpha$ to guarantee that $\Pi_1(X,x_0)\cong \pi_1^\alpha(X,x_0)$.  For any smaller cardinality, there may be certain loops or homotopies which are not be represented.  In other words, $\pi_1^\alpha(X,x_0)$ may be isomorphic to $\Pi_1(X,x_0)$ for $\alpha<\alpha(X,x_0)$, whereas if $\alpha\ge \alpha(X,x_0)$ then the isomorphism is assured.

Finally, we note that there is a canonical interval which can be thought of as the domain for all loops in a particular space $(X,x_0)$.  In particular, let $\hat\alpha(X,x_0)$ be the least cardinal greater than or equal to $\alpha(X,x_0)$ that can be written in the form $\sup\{2^\beta\mid \beta<\gamma\}$ for some $\gamma$.  Then we see that $I(\hat\alpha(X,x_0))=[0,1]^{\hat\alpha(X,x_0)}$ is an $\hat\alpha(X,x_0)$-perfect big interval and that $\pi_1^{\hat\alpha(X,x_0)}(X,x_0)$ is naturally isomorphic to $\Pi_1(X,x_0)$.  

% section realizing_the_big_fundamental_group_pi_1_ (end)

\nocite{james}
\nocite{lyndon}
\nocite{magnus}
\nocite{turner}
\nocite{eda}
\nocite{eda2}
\nocite{generalized-universal}
\nocite{small}
\nocite{cayley}
\nocite{hrbacek1999introduction}
\nocite{jech2003set}
\nocite{kunen1980set}
\bibliographystyle{amsplain}
\bibliography{infprod.bib}

\end{document}